\newtheorem{thm}{Theorem}[section]
\newtheorem{defn}[thm]{Definition}
\newtheorem{rem}[thm]{Remark}
\newtheorem{exm}[thm]{Example}
\date{~}
\begin{document}

\title{Boundary and Exterior of a Multiset Topology}
\author{J. Mahanta*, D. Das$^\perp$ \\
Department of Mathematics\\
         NIT Silchar, Assam, 788 010, India.\\
         *juthika.nits@gmail.com, $^\perp$ deboritadas1988@gmail.com}

\maketitle

\begin{abstract}
The concepts of exterior and boundary in multiset topological space are introduced.  We further established few relationships between the concepts of boundary, closure, exterior and interior of an $M$- set. These concepts have been pigeonholed by other existing notions viz., open sets, closed sets, clopen sets and limit points. The necessary and sufficient condition for a multiset to have an empty exterior is also discussed.
\end{abstract}

\vspace{.3 cm}

\section{Introduction}
The theory of sets is indispensable to the world of mathematics. But in set theory 
where repetitions of objects are not allowed it often become difficult to complex systems. If one considers those complex systems where repetitions of objects become certainly inevitable, the set theoretical concepts fails and thus one need more sophisticated tools to handle such situations. This led to the initiation of multiset (M-set) theory by Blizard \cite{BW} in 1989 as a generalization of set theory.
Multiset theory was further studied by Dedekind \cite{DR} by considering each element in the range of a function to have a multiplicity equal to the number of elements in the domain that are mapped to it. The theory of multisets have been studied by many other authors in different senses \cite{GK}, \cite{HM}, \cite{LA}, \cite{SD}, \cite{SDA}, \cite{SS} and \cite{WH}.\\

Since its inception M-set theory have been receiving considerable attention from researchers and wide application of the same can be found in literature [\cite{SDA},\cite{SS}, \cite{YY} etc ]. Algebraic structures for multiset space have been constructed by Ibrahim {\it{et al}}. in \cite{IA}. In \cite{OF}, use of multisets in colorings of graphs have been discussed by F. Okamota {\it{et al}}.  Application of M-set theory in decision making can be seen in \cite{YY}. A. Syropoulos \cite{SA}, presented a categorical approach to multisets along with partially ordered multisets. V. Venkateswaran \cite{VV} found a large new class of multisets Wilf equivalent pairs which is claimed to be the most general multiset Wilf equivalence result to date. In 2012, Girish and John \cite{GJ} introduced multiset topologies induced by multiset relations. The same authors further studied the notions of open sets, closed sets, basis, sub basis, closure and interior, continuity and related properties in M-topological spaces in \cite{GKJ}. Further the concepts of semi open, semi closed multisets were introduced in \cite{JD}, which were then used to study semicompactness in multiset topology.\\

In this paper, we introduce the concept of exterior and boundary in multiset topological space. We begin with preliminary notions
and definitions of multiset theory in section 2. Section 3 which contains main results forms the most fundamental part of the paper and it is followed by section 4 which contains the concluding remarks.

\section{Preliminaries}
Below are some definitions and results as discussed in \cite{GJ}, which are required throughout the paper.

\begin{defn} 
An M-set $M$ drawn from the set $X$ is represented by a function Count $M$ or $C_M : X \longrightarrow W$, where $W$ represents the set of whole numbers.\\

Here $C_M (x)$ is the number of occurrences of the element $x$ in the M-set $M$. We represent the M-set $M$ drawn from the set $X = \{x_1, . . . , x_n\}$ as $M= \{m_1/x_1, m_2/x_2, . . . , m_n/x_n\}$ where $m_i$ is the number of occurrences of the element $x_i, i=1, 2, . . ., n$ in the M-set $M$. Those elements which are not included in the M-set have zero count. 
\end{defn}
Note: Since the count of each element in an $M$-set is always a non-negative integer so we have taken $W$ as the range space instead of $N$.
\begin{exm}
Let $X = \{a, b, c\}$. Then $M = \{3/a, 5/b, 1/c\}$ represents an M-set drawn from $X$.
\end{exm}

Various operations on M-sets are defined as follows:

If $M$ and $N$ are two M-sets drawn from the set $X$, then
\begin{itemize} 
\item $M=N \Leftrightarrow C_M (x) = C_ N (x)~ \forall  x \in X$.
\item $M \subseteq N \Leftrightarrow C_M (x) \leq  C_ N (x) ~ \forall  x \in X$.
\item $P = M \cup N \Leftrightarrow C_P (x) = max \{C_ M (x), C_ N (x)\} ~ \forall x \in X$.
\item $P = M \cap N \Leftrightarrow C_P (x) = min \{C_ M (x), C_ N (x)\} ~ \forall x \in X$.
\item $P = M \oplus N \Leftrightarrow C_P (x) = C_ M (x) + C_ N (x) ~ \forall x \in X$.
\item $P = M \ominus N \Leftrightarrow C_P (x) = max \{C_ M (x)- C_ N (x), 0\} ~ \forall x \in X$, where $\oplus$ and $\ominus$ represents M-set addition and M-set subtraction respectively. 
\end{itemize}
\textbf{Operations under collections of M-sets:} Let $[X]^w$ be an M-space and $\{M_i ~|~i \in I \}$ be a collection of M-sets drawn from $[X]^w$. Then the following operations are defined
\begin{itemize}
\item $\underset{i \in I} {\bigcup} M_i = \{C_{M_i}(x)/x ~ |~ C_{M_i}(x) = max \{C_{M_i}(x) ~|~ x \in X\}\}$.
\item $\underset{i \in I} {\bigcap} M_i = \{C_{M_i}(x)/x ~ |~ C_{M_i}(x) = min \{C_{M_i}(x) ~|~ x \in X\}\}$.
\end{itemize}

\begin{defn}
The support set of an M-set $M$, denoted by $M^*$ is a subset of $X$ and is defined as $M^* = \{x \in X ~|~ C_M(x) > 0\}$. $M^*$ is also called root set.
\end{defn}

\begin{defn}
An M-set $M$ is called an empty M-set if $C_M(x) = 0, ~~ \forall  ~ x \in X$.
\end{defn}

\begin{defn}
A domain $X$, is defined as the set of elements from which M-set are constructed. The M-set space $[X]^w$ is the set of all M-sets whose elements are from X such that no element occurs more than w times.
\end{defn}

\begin{rem}
It is clear that the definition of the operation of M-set addition is not valid in the context of M-set space $[X]^w$, hence it was refined as\\
$C_{M_1 \oplus M_2}(x) = min \{w, C_{M_1}(x) + C_{M_2}(x)\}$ for all $x \in X$.
\end{rem}

 In multisets the number of occurrences of each element is allowed to be more than one which leads to generalization of the definition of subsets in classical set theory. So, in contrast to classical set theory, there are different types of subsets in multiset theory.

\begin{defn}
A subM-set N of M is said to be a whole subM-set if and only if $C_{N}(x)=C_{M}(x)$ for every $x \in N$.
\end{defn}

\begin{defn}
A subM-set N of M is said to be a partial whole subM-set if and only if $C_{N}(x)=C_{M}(x)$ for some $x \in N$.
\end{defn}

\begin{defn}
A subM-set N of M is said to be a full subM-set if and only if $C_{N}(x) \leq C_{M}(x)$ for every $x \in N$.
\end{defn}

As various subset relations exist in multiset theory, the concept of power M-set can also be generalized as follows:
\begin{defn} Let $M \in [X]^w$ be an M-set.
\begin{itemize}
\item The power M-set of M denoted by ${\mathcal{P}}(M)$ is defined as the set of all subM-sets of M.
\item The power whole M-set of M denoted by ${\mathcal{PW}}(M)$ is defined as the set of all whole subM-sets of M.
\item The power full M-set of M denoted by ${\mathcal{PF}}(M)$ is defined as the set of all full subM-sets of M.
\end{itemize}
\end{defn}
The power set of an M-set is the support set of the power M-set and is denoted by ${\mathcal{P}}^{*}(M)$.
\begin{defn}
Let $M \in [X]^w$ and $\tau \subseteq {\mathcal{P}}^{*}(M) $. Then $\tau$ is called an M-topology if it satisfies the following properties:
\begin{itemize}
\item The M-set M and the empty M-set $\phi$ are in $\tau$.
\item The M-set union of the elements of any subcollection of $\tau$ is in $\tau$.
\item The M-set intersection of the elements of any finite subcollection of $\tau$ is in $\tau$.
\end{itemize}
The elements of $\tau $ are called open M-set and their complements are called closed M-sets.
\end{defn}

\begin{defn}
Given a subM-set A of an M-topological space M in $[X]^w$
\begin{itemize}
\item The interior of A is defined as the M-set union of all open M-sets contained in A and is denoted by $int(A)$ i.e.,
$C_{int(A)}(x)= C_{\cup G}(x)$ where G is an open M-set and $G \subseteq A$.
\item The closure of A is defined as the M-set intersection of all closed M-sets containing A and is denoted by $cl(A)$ i.e.,
$C_{cl(A)}(x)= C_{\cap K}(x)$ where G is a closed M-set and $A \subseteq K$.
\end{itemize}
\end{defn}

\begin{defn} If $M$ is an M-set, then the M-basis for an M-topology in $[X]^w$ is a collection $\mathcal{B}$ of subM-sets of $M$ such that 
\begin{itemize}
\item For each $x {\in}^m M$, for some $m>0$ there is at least one M-basis element $B \in \mathcal{B}$ containing $m/x$.
\item If $m/x$ belongs to the intersection of two M -basis elements $P$ and $Q$, then $\exists$ an M- basis element $R$ containing $m/x$ such that $R \subseteq P \cap Q$ with $C_R(x)= C_{P \cap Q}(x)$ and $C_R(y) \leq  C_{P \cap Q}(y)$ $\forall y \neq x$.
\end{itemize}
\end{defn}

\begin{defn}
Let $(M, \tau)$ be an M-topological space in $[X]^w$ and A is a subM-set of M. If $k/x$ is an element of M then $k/x$ is a limt point of an M-set when every neighborhood of $k/x$ intersects A in some point (point with non-zero multiplicity) other than $k/x$ itself.
\end{defn}

\begin{defn}
Let $(M, \tau)$ be an M-topological space and N is a subM-set of M. The collection $\tau_N = \{N \cap U : U \in \tau\}$ is an M-topology on N, called the subspace M-topology. With this M-topology, N is called a subspace of M.
\end{defn}

Throughout the paper we shall follow the following definition of complement in an M- topological space.
\begin{defn} \cite{JD}
The M-complement of a subM-set $N$ in an M-topological space $(M, \tau)$ is denoted and defined as $N^{c}= M \ominus N$.
\end{defn}

\section{Exterior and Boundary of Multisets}

The notions of interior and closure of an M-set in M-topology have been introduced and studied by Jacob et al. \cite{GJ}. The other topological structures like exterior and boundary have remain untouched. In this section, we introduce the concepts of exterior and boundary in multiset topology.
\\ Consider an M-topological space $(M, \tau)$ in $[X]^w$.

\begin{defn}
The exterior of an M-set A in M is defined as the interior of M-complement of A and is denoted by ext(A), i.e., 
\begin{center}
 {\Large{C}}$_{ext(A)}(x) $= {\Large{C}}$_{int (A^c) }(x)$ for all $x \in X$.
\end{center}
\end{defn}

\begin{exm}
Let $X= \lbrace a,b \rbrace$,$w=3$ and $M= \lbrace 2/a, 3/b \rbrace$. We consider the topology $\tau=  \lbrace \phi, M, \lbrace 1/a\rbrace, \lbrace 2/b \rbrace, \lbrace 1/a, 2/b \rbrace\rbrace$ on $M$. Then exterior of the M-set $A=\lbrace 1/a, 3/b \rbrace$ is $\lbrace 1/a \rbrace$.
\end{exm}

\begin{rem}
$ext(A)$ is the largest open subM-set contained in $A^c$.
\end{rem}

\begin{defn}
The boundary of an M-set A is the M-set of elements which does not belong to the interior or the exterior of A. In other words, the boundary of an M-set A is the M-set of elements which belongs to the intersection of closure of A and closure of M-complement of A. It is denoted by bd(A).
\begin{center}
 {\Large{C}}$_{bd(A)}(x)$ = {\Large{C}}$_{cl(A) \cap cl(A^c)}(x)$ for all $x \in X$.
\end{center}
\end{defn}

\begin{exm}
Let $X=\lbrace a,b,c,d \rbrace$, $w=5$ and $M= \lbrace 5/a, 3/b, 5/c, 5/d \rbrace$. We consider the topology  $\tau= \lbrace \phi, M, \lbrace 1/a, 2/b, 3/c, 2/d \rbrace, \lbrace 1/a, 3/c \rbrace, \lbrace 2/b, 5/d \rbrace,$\\$ \lbrace 1/a, 2/b, 3/c, 5/d \rbrace, \lbrace 2/b, 2/d \rbrace \rbrace$ on $M$.Then  for any set $A=\lbrace 3/a, 3/b, 3/c, 3/d \rbrace$ we have $cl(A)= M$ and $cl(A^{c})= \lbrace 4/a, 1/b, 2/c, 3/d \rbrace$. Hence, $bd(A)= \lbrace 4/a, 1/b, 2/c, 3/d \rbrace$.
\end{exm}

\begin{rem}
 $bd(A)$ is the smallest closed subM-set containing $A^c$.
\end{rem}
\begin{rem}
$A$ and $A^c$ both have same boundary. 
\end{rem}

\begin{thm}
Let $(M, \tau)$ be an M-topological space. Then 
\begin{enumerate}[(i)]
\item {\Large{C}}$_{ext(A \cup B)}(x)= $ {\Large{C}}$_{ext(A) \cap ext(B)}(x)$ $,\forall x \in X$.
\item {\Large{C}}$_{ext(A \cap B)}(x) \geq $ {\Large{C}}$_{ext(A) \cup ext(B)}(x)$ $,\forall x \in X$.
\end{enumerate}
\end{thm}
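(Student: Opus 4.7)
The plan is to reduce both parts to the corresponding identities for the interior operator by using the De Morgan-type laws for multiset complements, and then invoke monotonicity of $int$ together with the fact (Remark from the paper) that $ext(A)=int(A^c)$ is the largest open subM-set contained in $A^c$.

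First I would verify the De Morgan identities $C_{(A\cup B)^c}(x)=C_{A^c\cap B^c}(x)$ and $C_{(A\cap B)^c}(x)=C_{A^c\cup B^c}(x)$. Since $A,B\subseteq M$ we have $C_A(x),C_B(x)\le C_M(x)$, so $C_{(A\cup B)^c}(x)=C_M(x)-\max\{C_A(x),C_B(x)\}=\min\{C_M(x)-C_A(x),\,C_M(x)-C_B(x)\}=\min\{C_{A^c}(x),C_{B^c}(x)\}=C_{A^c\cap B^c}(x)$, and an analogous one-line computation handles the other identity. After this, part (i) becomes $C_{int(A^c\cap B^c)}(x)=C_{int(A^c)\cap int(B^c)}(x)$ and part (ii) becomes $C_{int(A^c\cup B^c)}(x)\ge C_{int(A^c)\cup int(B^c)}(x)$.

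For part (i) I would prove the two inclusions separately. Since $A^c\cap B^c\subseteq A^c$, monotonicity of $int$ (itself an immediate consequence of $int$ being defined as the union of open subM-sets contained in the given M-set) gives $int(A^c\cap B^c)\subseteq int(A^c)$, and similarly for $B^c$, yielding $int(A^c\cap B^c)\subseteq int(A^c)\cap int(B^c)$. Conversely, $int(A^c)\cap int(B^c)$ is a finite M-set intersection of open M-sets, hence open by the third axiom of an M-topology; it is also contained in $A^c\cap B^c$; so by maximality $int(A^c)\cap int(B^c)\subseteq int(A^c\cap B^c)$. This gives equality in the count function, proving (i).

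For part (ii) only one inclusion is needed. Each of $int(A^c)$ and $int(B^c)$ is open, so their M-set union is open (second axiom). Since $int(A^c)\subseteq A^c\subseteq A^c\cup B^c$ and similarly for $int(B^c)$, the union $int(A^c)\cup int(B^c)$ is an open subM-set of $A^c\cup B^c=(A\cap B)^c$. By maximality of the interior, this union lies inside $int((A\cap B)^c)=ext(A\cap B)$, which is the required inequality. I would conclude with an explicit small example (taking $A,B$ from the example topology already given in the paper with $A\cap B^c$ nontrivial) to show the inequality in (ii) can be strict, justifying that equality is not claimed. The only delicate point to watch is the De Morgan step, where one must confirm $C_A,C_B\le C_M$; everything else is a straightforward application of the openness axioms and the universal property of $int$.
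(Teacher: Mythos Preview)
Your proposal is correct and follows essentially the same route as the paper: reduce $ext$ to $int$ of the complement via De Morgan, then use $int(P\cap Q)=int(P)\cap int(Q)$ for (i) and $int(P\cup Q)\supseteq int(P)\cup int(Q)$ for (ii). The only difference is that you actually justify these two interior facts (and the De Morgan identities for $\ominus$) from the axioms, whereas the paper simply invokes them; your added example for strictness in (ii) is a nice extra but not required by the theorem.
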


\begin{proof}
\begin{enumerate}[(i)]
\item From the definition of exterior,
 \begin{eqnarray*}
{\Large{C}}_{ext(A \cup B)}(x) &=&  {\Large{C}}_{int(A \cup B)^c}(x) \\
&=& {\Large{C}}_{int(A^c \cap B^c)}(x)\\
&=& {\Large{C}}_{int(A^c)\cap int(B^c)}(x)\\
&=& {\Large{C}}_{ext(A) \cap ext(B)}(x), \forall x \in X.
\end{eqnarray*}
\item \begin{eqnarray*}
{\Large{C}}_{ext(A \cap B)}(x) &=&  {\Large{C}}_{int(A \cap B)^c}(x) \\
&=& {\Large{C}}_{int(A^c \cup B^c)}(x)\\
&\geq & {\Large{C}}_{int(A^c)\cup int(B^c)}(x)\\
&=& {\Large{C}}_{ext(A) \cup ext(B)}(x), \forall x \in X.
\end{eqnarray*}
\end{enumerate}
\end{proof}

\begin{thm}
Let $(M, \tau)$ be an M-topological space in $[X]^w$. For any two M-sets $A$ and $B$ in $M$, the following results hold:
\begin{enumerate}[(i)]
\item {\Large{C}}$_{(bd(A))^c}(x)$ = {\Large{C}}$_{int(A) \cup int(A^c)}(x) = $ {\Large{C}}$_{int(A) \cup ext(A)}(x)$  
\item {\Large{C}}$_{cl(A)}(x)$ = {\Large{C}}$_{int(A) \cup bd(A)}(x)$
\item {\Large{C}}$_{bd(A)}(x)$ = {\Large{C}}$_{cl(A) \ominus int(A)}(x) $
\item {\Large{C}}$_{int(A)}(x)$ = {\Large{C}}$_{A \ominus bd(A)}(x) $
\end{enumerate}
\end{thm}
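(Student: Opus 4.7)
The plan is to derive all four identities from the definitions of $int$, $cl$, $ext$, $bd$, organized around two preliminary facts which I would first isolate as short lemmas. The first is the pair of De Morgan laws for the multiset operations relative to M-complement, namely $(A \cap B)^c = A^c \cup B^c$ and $(A \cup B)^c = A^c \cap B^c$, each a direct $\max$/$\min$ bookkeeping exercise on counts. The second is the interior/closure duality $(cl(B))^c = int(B^c)$ and $(int(B))^c = cl(B^c)$, which follows by noting that complementation converts the collection of closed M-sets containing $B$ into the collection of open subM-sets of $B^c$, whereupon the defining M-set intersection becomes the defining M-set union.

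With these in hand, part (i) is a direct unfolding:
\[
(bd(A))^c = \bigl(cl(A) \cap cl(A^c)\bigr)^c = (cl(A))^c \cup (cl(A^c))^c = int(A^c) \cup int(A) = ext(A) \cup int(A),
\]
which displays both forms of the right-hand side at once.

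For (iii), the key move is to substitute $cl(A^c) = (int(A))^c = M \ominus int(A)$ into $bd(A) = cl(A) \cap cl(A^c)$, giving at each $x$
\[
C_{bd(A)}(x) = \min\bigl\{C_{cl(A)}(x),\, C_M(x) - C_{int(A)}(x)\bigr\},
\]
and then to match this against $C_{cl(A) \ominus int(A)}(x) = C_{cl(A)}(x) - C_{int(A)}(x)$ using the chain $int(A) \subseteq cl(A) \subseteq M$ to control the minimum. Once (iii) is established, (ii) follows by taking the M-set union of both sides with $int(A)$ and absorbing $int(A)$ into $cl(A)$, and (iv) is obtained symmetrically by rewriting $A \ominus bd(A) = A \ominus (cl(A) \ominus int(A))$ and pushing the nested $\ominus$ through count-by-count, invoking $int(A) \subseteq A \subseteq cl(A)$ to discharge the outer $\max\{\cdot,0\}$.

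The main obstacle I anticipate is the careful arithmetic around $\ominus$. Truncated M-set subtraction does not obey the classical identities $A \cup (B \setminus A) = A \cup B$ or $A \ominus (B \ominus A) = A \cap B$ automatically, so the shortcuts that would make (ii) and (iv) one-liners in ordinary set theory must be replaced by explicit count-by-count verifications, with the inclusion chain $int(A) \subseteq A \subseteq cl(A) \subseteq M$ invoked at precisely the right step to guarantee that every $\max\{\cdot, 0\}$ clipping is vacuous and every $\min$ resolves as desired.
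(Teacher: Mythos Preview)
Your argument for (i) is the same as the paper's: De Morgan on $(cl(A)\cap cl(A^c))^c$ followed by the duality $(cl(B))^c=int(B^c)$. Where you diverge is organizational---you make (iii) the hub and derive (ii) and (iv) from it, whereas the paper treats each part separately---but the plan for (iii) has a genuine gap. You correctly reach
\[
C_{bd(A)}(x)=\min\bigl\{C_{cl(A)}(x),\ C_M(x)-C_{int(A)}(x)\bigr\},
\]
yet the proposed match with $C_{cl(A)}(x)-C_{int(A)}(x)$ does \emph{not} follow from the chain $int(A)\subseteq cl(A)\subseteq M$. Writing $a=C_{int(A)}(x)$, $b=C_{cl(A)}(x)$, $m=C_M(x)$, the constraints $a\le b\le m$ yield only $\min\{b,\,m-a\}\ge b-a$; equality forces $a=0$ or $b=m$, neither of which is guaranteed. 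Concretely, take $M=\{4/x\}$, $\tau=\{\phi,M,\{1/x\},\{3/x\}\}$, $A=\{2/x\}$: then $int(A)=\{1/x\}$, $cl(A)=cl(A^c)=\{3/x\}$, so $bd(A)=\{3/x\}$ while $cl(A)\ominus int(A)=\{2/x\}$; the same $A$ also breaks (iv), since $A\ominus bd(A)=\phi\neq\{1/x\}=int(A)$. Your derivation of (ii) via ``union both sides with $int(A)$'' inherits the same trouble: $\max\{a,\,b-a\}$ need not equal $b$.

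In fairness, the paper's own proof commits the identical arithmetic slips: in (iii) it asserts $C_{cl(A)\cap(int(A))^c}(x)=C_{cl(A)}(x)-C_{int(A)}(x)$ and $int(A)\cap(int(A))^c=\phi$, and in (ii) it uses $int(A)\cup(int(A))^c=M$, none of which hold for M-sets in general. So the obstacle you anticipated---that $\ominus$ and M-complement do not behave like ordinary set difference---is not a bookkeeping detail that the inclusion chain will discharge; it actually blocks parts (ii)--(iv) as stated, in your argument and in the paper's alike.
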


\begin{proof}
 \begin{eqnarray*} (i) {\Large{C}}_{(bd(A))^c}(x) &=& {\Large{C}}_{(cl(A) \cap cl(A^c))^c}(x)\\
&=& {\Large{C}}_{(cl(A))^c \cup (cl(A^c))^c}(x)\\
&=& {\Large{C}}_{int(A^c) \cup int(A)}(x)\\
&=& {\Large{C}}_{int(A) \cup ext(A)}(x).
\end{eqnarray*}
\begin{eqnarray*} (ii)  {\Large{C}}_{int(A) \cup bd(A)}(x) &=&  {\Large{C}}_{int(A) \cup (cl(A) \cap cl(A^c))}(x)\\
 &=&  {\Large{C}}_{(int(A) \cup (cl(A)) \cap (int(A) \cup cl(A^c))}(x)\\
  &=&  {\Large{C}}_{ cl(A) \cap (int(A) \cup (int(A))^c)}(x)\\
   &=&  {\Large{C}}_{ cl(A)}(x).
\end{eqnarray*}
(iii) We have  {\Large{\it C}}$_{cl(A) \ominus int(A)}(x) = max \{$ {\Large{\it C}}$_{cl(A)}(x) - ${\Large{\it C}}$_{int(A)}(x), 0 \}$ 

\begin{itemize}
\item Case 1: max is 0\\
So we must have {\Large{\it C}}$_{cl(A)}(x) =$ {\Large{\it C}}$_{int(A)}(x)$.\\
Then  {\Large{\it C}}$_{bd(A)}(x) =$ {\Large{\it C}}$_{int(A) \cap cl(A^c)}(x)=$ {\Large{\it C}}$_{int(A) \cap (int(A))^c}(x)= $ {\Large{\it C}}$_{\phi}(x)$.

\item Case 2: max is {\Large{\it C}}$_{cl(A)}(x) - ${\Large{\it C}}$_{int(A)}(x)$.
Then \\
\begin{eqnarray*}{\Large{\it C}}_{bd(A)}(x) &=& {\Large{\it C}}_{cl(A) \cap cl(A^c)}(x) \\ &=& {\Large{\it C}}_{cl(A) \cap (int(A))^c}(x) \\ &=&  {\Large{\it C}}_{cl(A)}(x) - {\Large{\it C}}_{int(A)}(x). \end{eqnarray*}
\end{itemize}
(iv) We have  {\Large{\it C}}$_{A \ominus bd(A)}(x) = max \{$ {\Large{\it C}}$_{A}(x) - ${\Large{\it C}}$_{bd(A)}(x), 0 \}$ 
\begin{itemize}
\item Case 1: When max is 0 proof is trivial.
\item Case 2: When max is {\Large{\it C}}$_{A}(x) - ${\Large{\it C}}$_{bd(A)}(x)$, we have\\
\begin{eqnarray*} 
{\Large{\it C}}_{A \ominus bd(A)}(x) &=& {\Large{\it C}}_{A \cap (bd(A))^c}(x)\\
&=& {\Large{\it C}}_{A \cap (int(A) \cup ext(A))}(x)\\
&=& {\Large{\it C}}_{A \cap (int(A) \cup int(A^c))}(x)\\
&=& {\Large{\it C}}_{(A \cap int(A)) \cup ( A \cap int(A^c))}(x)\\
&=& {\Large{\it C}}_{ int(A) \cup \phi}(x)\\
&=& {\Large{\it C}}_{ int(A)}(x).
\end{eqnarray*}
\end{itemize}
\end{proof}

The following three theorems characterize the open and closed M-sets in terms of boundary.
\begin{thm}
Let $A$ be a subM-sets in an M-topology $(M, \tau)$. Then $A$ is open if and only if {\Large{C}}$_{A \cap bd(A)}(x)= 0 $, $\forall x \in X.$
\end{thm}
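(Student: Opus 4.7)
The plan is to reduce everything to part (iv) of the preceding theorem, which gives the count-wise identity
\[
C_{int(A)}(x) \;=\; C_{A \ominus bd(A)}(x) \;=\; \max\{C_A(x) - C_{bd(A)}(x),\, 0\}
\]
for every $x \in X$. Observing that $A$ is open if and only if $int(A) = A$, i.e., $C_{int(A)}(x) = C_A(x)$ for all $x$, both directions of the equivalence become purely pointwise arithmetic statements about the nonnegative integers $C_A(x)$ and $C_{bd(A)}(x)$, and the condition $C_{A \cap bd(A)}(x) = 0$ rewrites as $\min\{C_A(x), C_{bd(A)}(x)\} = 0$.

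For the forward direction, I would assume $A$ is open, so $C_A(x) = \max\{C_A(x) - C_{bd(A)}(x), 0\}$ by part (iv). For any $x$ with $C_A(x) > 0$, this forces $C_A(x) - C_{bd(A)}(x) = C_A(x)$, hence $C_{bd(A)}(x) = 0$; for any $x$ with $C_A(x) = 0$, the minimum is already $0$ regardless of $C_{bd(A)}(x)$. In either case $\min\{C_A(x), C_{bd(A)}(x)\} = 0$, which is exactly $C_{A \cap bd(A)}(x) = 0$.

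For the converse, I would start from the hypothesis that for every $x$ at least one of $C_A(x)$ or $C_{bd(A)}(x)$ vanishes, and again invoke part (iv). If $C_A(x) = 0$, then $\max\{C_A(x) - C_{bd(A)}(x), 0\} = 0 = C_A(x)$. If $C_{bd(A)}(x) = 0$, then $\max\{C_A(x), 0\} = C_A(x)$. Either way $C_{int(A)}(x) = C_A(x)$ for all $x$, so $int(A) = A$ and $A$ is open.

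I do not expect any real obstacle here; the only thing to be careful about is the $\ominus$ convention, which truncates negatives to zero, so the argument has to handle the vacuous case $C_A(x) = 0$ separately from the substantive case $C_{bd(A)}(x) = 0$. Once part (iv) is available, this is essentially a one-line pointwise check on each element of $X$.
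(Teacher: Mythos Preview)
Your argument is correct. Part~(iv) of the preceding theorem gives $C_{int(A)}(x)=\max\{C_A(x)-C_{bd(A)}(x),0\}$, and your case split on whether $C_A(x)$ or $C_{bd(A)}(x)$ vanishes cleanly establishes both implications. The treatment of the truncated subtraction is handled properly.

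The paper proceeds differently: it works directly with the definition $bd(A)=cl(A)\cap cl(A^c)$ and the duality $cl(A^c)=(int(A))^c$. For the forward direction it replaces $A$ by $int(A)$ and observes $int(A)\cap bd(A)\subseteq int(A)\cap(int(A))^c=\phi$. For the converse it unpacks $A\cap bd(A)=A\cap cl(A)\cap cl(A^c)=A\cap cl(A^c)$, concludes $cl(A^c)\subseteq A^c$, and hence that $A^c$ is closed. Your route packages the closure/complement manipulations into the earlier lemma and then finishes with elementary integer arithmetic, which makes the proof self-contained once part~(iv) is in hand and avoids re-deriving the $cl(A^c)=(int(A))^c$ identity. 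The paper's route is slightly more transparent about \emph{why} the result holds topologically (it is really the statement that an open set is disjoint from the closure of its complement), whereas yours is shorter and mechanically verifiable pointwise.
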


\begin{proof}
 Let $A$ be an open M-set. Then {\Large{\it C}}$_{int(A)}(x)=$ {\Large{\it C}}$_{A}(x)$ , $\forall x \in X$. Now, {\Large{\it C}}$_{A \cap bd(A)}(x)= $ {\Large{\it C}}$_{int(A) \cap bd(A)}(x)= 0$. 
 \\Conversely, let $A$ be an M-set such that {\Large{\it C}}$_{A \cap bd(A)}(x) = 0 \Rightarrow $ {\Large{\it C}}$_{A \cap (cl(A)\cap cl(A^c))}(x) = 0 \Rightarrow $ {\Large{\it C}}$_{A \cap cl(A^c)}(x) = 0 \Rightarrow $ {\Large{\it C}}$_{ cl(A^c)}(x) \leq $ {\Large{\it C}}$_{A^c}(x) \Rightarrow  A^c$ is closed M-set $\Rightarrow A$ is open M-set.
\end{proof}

\begin{thm}
Let $A$ be a subM-sets in an M-topology $(M, \tau)$. Then$A$ is closed if and only if {\Large{C}}$_{bd(A)}(x) \leq $ {\Large{C}}$_A(x)$ , $\forall x \in X.$
\end{thm}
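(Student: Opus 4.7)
The plan is to prove both directions by leaning on the identity $cl(A) = int(A) \cup bd(A)$ from part (ii) of the preceding theorem, together with the definition $bd(A) = cl(A) \cap cl(A^c)$ and the tautology $A \subseteq cl(A)$ (i.e., $C_A(x)\le C_{cl(A)}(x)$).

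For the forward direction, assume $A$ is closed, so $cl(A) = A$. Directly from the definition of boundary, $C_{bd(A)}(x) = C_{cl(A)\cap cl(A^c)}(x) = \min\{C_{cl(A)}(x), C_{cl(A^c)}(x)\} \le C_{cl(A)}(x) = C_A(x)$. This half is essentially immediate; no calculation beyond unpacking the definition is needed.

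For the converse, assume $C_{bd(A)}(x)\le C_A(x)$ for all $x\in X$. I would invoke part (ii) of the previous theorem to write $C_{cl(A)}(x) = C_{int(A)\cup bd(A)}(x) = \max\{C_{int(A)}(x), C_{bd(A)}(x)\}$. Since $int(A)\subseteq A$ gives $C_{int(A)}(x)\le C_A(x)$ and the hypothesis gives $C_{bd(A)}(x)\le C_A(x)$, the max is at most $C_A(x)$. Combined with the always-valid inequality $C_A(x)\le C_{cl(A)}(x)$, we obtain equality $C_{cl(A)}(x) = C_A(x)$, i.e., $cl(A) = A$, so $A$ is closed.

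There is no real obstacle here; the only mildly delicate point is making sure one uses the already-established identity $cl(A) = int(A)\cup bd(A)$ rather than trying to argue by truncated subtraction directly (which would force one into case analysis analogous to that in the proof of part (iii)). Using (ii) keeps both directions to a couple of lines apiece.
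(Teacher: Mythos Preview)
Your proof is correct. The forward direction coincides with the paper's. For the converse, however, the paper takes a different route: from $C_{bd(A)}(x)\le C_A(x)$ it infers $C_{bd(A)\cap A^c}(x)=0$, then uses $bd(A)=bd(A^c)$ together with the preceding open-set characterization (an $M$-set $B$ is open iff $C_{B\cap bd(B)}(x)=0$) to conclude that $A^c$ is open, hence $A$ closed.

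Your route via the identity $cl(A)=int(A)\cup bd(A)$ is not merely different but in fact more robust in the multiset setting. The implication ``$N\subseteq A \Rightarrow N\cap A^c=\phi$'' that the paper relies on is automatic for ordinary sets but does \emph{not} hold for $M$-sets with the $\ominus$-complement: for instance, with $M=\{4/a\}$, $\tau=\{\phi,\{2/a\},M\}$ and $A=\{2/a\}$ one has $bd(A)=\{2/a\}\subseteq A$ while $bd(A)\cap A^c=\{2/a\}\neq\phi$. By computing $C_{cl(A)}(x)=\max\{C_{int(A)}(x),C_{bd(A)}(x)\}$ and bounding each term by $C_A(x)$, you sidestep this issue entirely and obtain $cl(A)=A$ directly. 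The paper's route does have the aesthetic advantage of exhibiting the closed-set theorem as the formal dual of the open-set theorem, but the intermediate step there requires further justification in the $M$-set context.
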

\begin{proof}
 Let $A$ be a closed M-set. Then {\Large{\it C}}$_{cl(A)}(x)=$ {\Large{\it C}}$_{A}(x)$ , $\forall x \in X$. Now, {\Large{\it C}}$_{bd(A)}(x)= $ {\Large{\it C}}$_{cl(A) \cap cl(A^c)}(x) \leq$ {\Large{\it C}}$_{cl(A)}(x)=$ {\Large{\it C}}$_A(x)$, $\forall x \in X$. 
 \\Conversely,let {\Large{\it C}}$_{bd(A)}(x) \leq $ {\Large{\it C}}$_A(x) \Rightarrow$ {\Large{\it C}}$_{bd(A) \cap A^c}(x) =0 \Rightarrow$ {\Large{\it C}}$_{bd(A^c) \cap A^c}(x) =0 $. Therefore, $A^c$ is an open M-set. Hence, A is a closed M-set.
\end{proof}

\begin{thm}
Let $A$ be a subM-sets in an M-topology $(M, \tau)$. Then $A$ is clopen if and only if {\Large{C}}$_{bd(A)}(x) =0 .$ 
\end{thm}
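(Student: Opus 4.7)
The plan is to reduce the statement to the two preceding theorems, which separately characterize openness and closedness in terms of the boundary. Since ``clopen'' means simultaneously open and closed, the biconditional should fall out by combining the two characterizations once one verifies that the common information they give on $bd(A)$ forces it to be the empty M-set.

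First I would handle the ``only if'' direction. Assume $A$ is clopen. Applying the open-set theorem yields $C_{A\cap bd(A)}(x)=0$, i.e.\ $\min\{C_A(x),C_{bd(A)}(x)\}=0$ for every $x\in X$. Applying the closed-set theorem yields $C_{bd(A)}(x)\le C_A(x)$ for every $x\in X$. Now fix $x$. From the first relation, either $C_A(x)=0$ or $C_{bd(A)}(x)=0$; in the former case the second relation forces $C_{bd(A)}(x)\le 0$, hence $C_{bd(A)}(x)=0$; in the latter case we already have the conclusion. So in all cases $C_{bd(A)}(x)=0$.

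For the ``if'' direction, assume $C_{bd(A)}(x)=0$ for every $x$. Then trivially $C_{A\cap bd(A)}(x)=\min\{C_A(x),0\}=0$, so $A$ is open by the open-set characterization. Equally trivially $C_{bd(A)}(x)=0\le C_A(x)$, so $A$ is closed by the closed-set characterization. Hence $A$ is clopen.

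The only step with any subtlety is the forward direction, where one must combine the two inequalities carefully; the rest is immediate from the earlier results, so I do not expect any real obstacle. If a more symmetric proof is preferred, one could alternatively invoke the remark that $A$ and $A^c$ share a boundary and argue via $bd(A)\subseteq A$ (from closedness) together with $bd(A)=bd(A^c)\subseteq A^c$ (from openness of $A$, which makes $A^c$ closed), reaching the same conclusion.
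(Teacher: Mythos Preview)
Your proof is correct. Both directions go through exactly as you wrote them: in the forward direction the case split on $\min\{C_A(x),C_{bd(A)}(x)\}=0$ combined with $C_{bd(A)}(x)\le C_A(x)$ cleanly forces $C_{bd(A)}(x)=0$, and the reverse direction is immediate.

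Your route differs from the paper's. You treat the two preceding characterizations (open $\Leftrightarrow C_{A\cap bd(A)}=0$; closed $\Leftrightarrow C_{bd(A)}\le C_A$) as black boxes and combine them pointwise. The paper instead unfolds the definition $bd(A)=cl(A)\cap cl(A^c)$ directly: from $C_{cl(A)\cap cl(A^c)}(x)=0$ it deduces $C_{cl(A)}(x)\le C_{(cl(A^c))^c}(x)=C_{int(A)}(x)\le C_A(x)$ for closedness, and similarly $C_A(x)\le C_{int(A)}(x)$ for openness; conversely, when $A$ is clopen it computes $C_{cl(A)\cap (int(A))^c}(x)=C_{A\cap A^c}(x)=0$. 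Your argument is more modular and shows that the clopen characterization is a formal consequence of the open and closed characterizations already established, whereas the paper's version is self-contained and exposes the underlying closure/interior duality $(cl(A^c))^c=int(A)$ explicitly. The alternative symmetric argument you sketch at the end (using $bd(A)=bd(A^c)$ together with $bd(A)\subseteq A$ and $bd(A)\subseteq A^c$) is also valid and perhaps the slickest of the three.
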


\begin{proof}
Let {\Large{\it C}}$_{bd(A)}(x) =0 \Rightarrow$ {\Large{\it C}}$_{cl(A) \cap cl(A^c)}(x) =0 \Rightarrow$ {\Large{\it C}}$_{cl(A)}(x) \leq$ {\Large{\it C}}$_{(cl(A^c))^c}(x) \Rightarrow $ {\Large{\it C}}$_{cl(A)}(x) \leq$ {\Large{\it C}}$_{int(A)}(x) \leq$  {\Large{\it C}}$_A(x)\Rightarrow A$ is a closed M-set.
\\Again, {\Large{\it C}}$_{cl(A) \cap cl(A^c)}(x) =0 \Rightarrow$ {\Large{\it C}}$_{cl(A)\cap (int(A))^c}(x) \Rightarrow$ {\Large{\it C}}$_{A \cap (int(A))^c}(x) \Rightarrow $ {\Large{\it C}}$_A(x) \leq $ {\Large{\it C}}$_{int(A)}(x) \Rightarrow A$ is an open M-set.
\\Conversely, let $A$ be both open and closed M-set.
\\ Then {\Large{\it C}}$_{bd(A)}(x) =$ {\Large{\it C}}$_{cl(A) \cap cl(A^c)}(x)=$ {\Large{\it C}}$_{cl(A) \cap (int(A))^c}(x) =$ {\Large{\it C}}$_{A \cap A^c}(x) =0$.   
\end{proof}

\begin{thm}
For any two M-sets $A$ and $B$ in $(M, \tau)$ the followings hold true:
\begin{enumerate}[(i)]
\item {\Large{C}}$_{bd(A \cup B)}(x)  \leq $  {\Large{C}}$_{bd(A) \cup bd(B)}(x)$, $\forall x \in X.$
\begin{proof} Let $A$ and $B$ be any two M-sets in $(M, \tau)$. Then,
 \begin{eqnarray*} {\Large{C}}_{bd(A \cup B)}(x)  &=&  {\Large{C}}_{cl(A \cup B)  \cap cl(A \cup B)^c}(x)\\
  &=&  {\Large{C}}_{[cl(A) \cup cl(B)]  \cap [cl(A^c) \cap cl(B^c)]}(x)\\
   &=&  {\Large{C}}_{[(cl(A) \cap cl(A^c))  \cap (cl(A) \cap cl(B^c))] \cup [(cl(B) \cap cl(A^c))\cap (cl(B) \cap cl(B^c))]}(x)\\
   &=&  {\Large{C}}_{[bd(A)  \cap (cl(A) \cap cl(B^c))] \cup [(cl(B) \cap cl(A^c))\cap bd(B)]}(x)\\
   &\leq &  {\Large{C}}_{bd(A)  \cup  bd(B)}(x).   
\end{eqnarray*} 
\end{proof}
\item  {\Large{C}}$_{bd(A \cap B)}(x)  \leq $  {\Large{C}}$_{bd(A) \cap bd(B)}(x)$, $\forall x \in X.$
\begin{proof} Let $A$ and $B$ be any two M-sets in $(M, \tau)$. Then,
\begin{eqnarray*} {\Large{C}}_{bd(A \cap B)}(x)  &=&  {\Large{C}}_{cl(A \cap B)  \cap cl(A \cap B)^c}(x)\\
  &=&  {\Large{C}}_{[cl(A) \cap cl(B)]  \cap [cl(A^c) \cup cl(B^c)]}(x)\\
   &=&  {\Large{C}}_{[(cl(A) \cap cl(A^c))  \cap (cl(A) \cap cl(B^c))] \cap [(cl(B) \cap cl(A^c))\cap (cl(B) \cap cl(B^c))]}(x)\\
   &=&  {\Large{C}}_{[bd(A)  \cap (cl(A) \cap cl(B^c))] \cap [(cl(B) \cap cl(A^c))\cap bd(B)]}(x)\\
   &\leq &  {\Large{C}}_{bd(A)  \cap  bd(B)}(x).   
\end{eqnarray*}
\end{proof}
\end{enumerate}
\end{thm}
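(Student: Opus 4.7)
The plan is to attack both inequalities in parallel by unfolding the definition $bd(X)=cl(X)\cap cl(X^c)$ and invoking three ingredients: the multiset De Morgan laws $(A\cup B)^c=A^c\cap B^c$ and $(A\cap B)^c=A^c\cup B^c$; the behaviour of closure under union and intersection, namely $cl(A\cup B)=cl(A)\cup cl(B)$ from \cite{GJ} together with the monotone inclusion $cl(A\cap B)\subseteq cl(A)\cap cl(B)$; and finally the distributivity $X\cap(Y\cup Z)=(X\cap Y)\cup(X\cap Z)$ of multiset intersection over union, which at the count level is the numerical identity $\min(x,\max(y,z))=\max(\min(x,y),\min(x,z))$.

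For (i), I would substitute these facts into $bd(A\cup B)=cl(A\cup B)\cap cl((A\cup B)^c)$ to obtain the upper bound $[cl(A)\cup cl(B)]\cap[cl(A^c)\cap cl(B^c)]$, distribute the outer intersection across the union, and regroup the three-fold meets to land on $[bd(A)\cap cl(B^c)]\cup[bd(B)\cap cl(A^c)]$. Since each of the two joined terms is pointwise bounded by $bd(A)$ or $bd(B)$ respectively, the whole expression lies below $bd(A)\cup bd(B)$, as claimed.

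For (ii), the same template is applied with the roles of $\cup$ and $\cap$ swapped. Starting from $bd(A\cap B)\leq[cl(A)\cap cl(B)]\cap[cl(A^c)\cup cl(B^c)]$ and distributing the outer intersection over the inner union, the calculation isolates three-fold meets that contain $cl(A)\cap cl(A^c)=bd(A)$ and $cl(B)\cap cl(B^c)=bd(B)$; the final count-wise bookkeeping then delivers the stated bound.

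The main technical subtlety is the distributive step, since in multiset arithmetic one has to be sure that the pointwise max and min operations really obey the full lattice identity in the form required; I would discharge this once by appealing to the numerical identity above and then use it without further comment. A secondary point is that the only inequality actually introduced in the chain for (i) comes from closure-monotonicity applied to $A^c\cap B^c$, and in (ii) from $cl(A\cap B)\leq cl(A)\cap cl(B)$, so each bound is as tight as this single ingredient allows.
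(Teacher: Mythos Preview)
Your treatment of part (i) is essentially the paper's argument: unfold $bd$, use $cl(A\cup B)=cl(A)\cup cl(B)$ together with De~Morgan, distribute the outer intersection over the union, and bound each resulting term by the corresponding $bd$. If anything you are more careful than the paper, since you correctly flag that the passage from $cl(A^c\cap B^c)$ to $cl(A^c)\cap cl(B^c)$ is only an inclusion, whereas the paper writes it as an equality.

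Part (ii), however, does not close. Carrying out the distributive step you describe gives
\[
[cl(A)\cap cl(B)]\cap[cl(A^c)\cup cl(B^c)]
=\bigl(bd(A)\cap cl(B)\bigr)\,\cup\,\bigl(cl(A)\cap bd(B)\bigr),
\]
a \emph{union} of two three-fold meets. Each summand is pointwise below $bd(A)$, respectively $bd(B)$, so the whole expression is below $bd(A)\cup bd(B)$; nothing forces it below $bd(A)\cap bd(B)$. Your phrase ``the final count-wise bookkeeping then delivers the stated bound'' glosses over precisely this point. The paper's own calculation for (ii) has the same defect---it writes an $\cap$ at the step where distributivity actually produces a $\cup$---so the printed chain is not a valid derivation either. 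Indeed the inclusion $bd(A\cap B)\subseteq bd(A)\cap bd(B)$ already fails in ordinary point-set topology (take $A=[0,2]$ and $B=[1,3]$ in $\mathbb{R}$: then $bd(A\cap B)=\{1,2\}$ while $bd(A)\cap bd(B)=\emptyset$), so no rearrangement of the argument will yield the bound as stated; the inequality that the method genuinely proves is again $C_{bd(A\cap B)}(x)\leq C_{bd(A)\cup bd(B)}(x)$.
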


\begin{thm}
In an M-topological space, for any M-set A $bd(bd(A))$ is a closed M-set.
\end{thm}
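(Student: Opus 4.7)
The plan is to reduce the claim to a single structural observation: for \emph{every} subM-set $C$ of $M$, the boundary $bd(C)=cl(C)\cap cl(C^{c})$ is a closed M-set. Once this is established, the theorem follows immediately by applying the observation to $C=bd(A)$, without any further analysis of what $bd(A)$ looks like.

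To justify the observation, I would use two standard facts about the M-topology $(M,\tau)$. First, for any M-set $C$, the closure $cl(C)$ is closed: by definition it is the M-set intersection of \emph{all} closed M-sets containing $C$, and such an arbitrary M-set intersection of closed M-sets is closed, since by De Morgan's laws its M-complement is the M-set union of the corresponding open M-sets, which lies in $\tau$. Second, the M-set intersection of two closed M-sets is closed, by the dual argument applied to the axiom that $\tau$ is closed under \emph{finite} intersections. Both facts rely on De Morgan's identities $(A\cup B)^{c}=A^{c}\cap B^{c}$ and $(A\cap B)^{c}=A^{c}\cup B^{c}$ for multiset complements defined by $A^{c}=M\ominus A$; these were already invoked implicitly by the authors in part (i) of the earlier theorem and can be verified by a routine count-function calculation.

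Given these two facts, the proof is essentially one line: both $cl(bd(A))$ and $cl\bigl((bd(A))^{c}\bigr)$ are closed M-sets, and hence so is their M-set intersection
\[
C_{bd(bd(A))}(x) \;=\; C_{\,cl(bd(A))\,\cap\,cl((bd(A))^{c})}(x), \qquad \forall x\in X.
\]
The only conceptual step requiring care is checking that the ordinary De Morgan manipulations transfer verbatim to the $\ominus$-complement in $[X]^{w}$; since everything here happens inside the ambient M-set $M$, the counts behave exactly as in the set-theoretic case, and I do not expect this to be a genuine obstacle. No induction, no case split on whether $\max$ in the definition of $\ominus$ vanishes, and no appeal to the earlier results on $bd$ beyond its definition is needed.
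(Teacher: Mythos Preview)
Your argument is correct, and in fact it proves the stronger fact that $bd(C)$ is closed for \emph{every} subM-set $C$; the double boundary in the statement plays no role. The paper's proof reaches the same conclusion by a slightly more computational route: setting $B=bd(A)$, it verifies directly that
\[
C_{cl(bd(B))}(x)=C_{cl(cl(B)\cap cl(B^{c}))}(x)\le C_{cl(cl(B))\cap cl(cl(B^{c}))}(x)=C_{cl(B)\cap cl(B^{c})}(x)=C_{bd(B)}(x),
\]
using idempotence of closure and $cl(P\cap Q)\subseteq cl(P)\cap cl(Q)$, so that $bd(B)$ contains its own closure. This is of course equivalent to your observation that $cl(B)\cap cl(B^{c})$ is an intersection of two closed M-sets, but the paper never isolates ``finite intersections of closed M-sets are closed'' as a lemma and instead re-derives its effect inline. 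Your formulation is cleaner and makes transparent that the hypothesis ``$B=bd(A)$'' is never used; the paper's version has the minor advantage of avoiding an explicit appeal to the De~Morgan identities for the $\ominus$-complement, relying only on monotonicity and idempotence of $cl$.
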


\begin{proof}
Let  $bd(A)=B$. Then
\begin{eqnarray*} {\Large{C}}_{cl(bd(bd(A)))}(x) &=& {\Large{C}}_{cl(bd(B))}(x) \\
&=& {\Large{C}}_{cl(cl(B) \cap cl(B^c))}(x) \\
& \leq & {\Large{C}}_{cl(cl(B)) \cap cl(cl(B^c))}(x) \\
&=& {\Large{C}}_{cl(B) \cap cl(B^c)}(x) \\
&=& {\Large{C}}_{bd(B)}(x) \\
&=& {\Large{C}}_{bd(bd(A))}(x). \\
\end{eqnarray*}
i.e., closure of $bd(bd(A))$ is contained in itself and hence is a closed M-set.
\end{proof}

\begin{thm}
In an M-topological space, for any M-set A we have the following:
\begin{enumerate}[(i)]
\item {\Large{C}}$_{bd(bd(A))}(x)  \leq $  {\Large{C}}$_{bd(A)}(x)$, $\forall x \in X.$
\begin{proof} \begin{eqnarray*}{\Large{C}}_{bd(bd(A))}(x)  &=&   {\Large{C}}_{bd(cl(A) \cap cl(A^c))}(x) \\
&=&   {\Large{C}}_{[cl(cl(A) \cap cl(A^c))] \cap [cl(cl(A) \cap cl(A^c))^c]}(x) \\
& \leq &   {\Large{C}}_{[cl(A) \cap cl(A^c)] \cap [cl(int(A^c) \cup int(A))]}(x) \\
&=&   {\Large{C}}_{bd(A) \cap cl(M)}(x) \\
&=&   {\Large{C}}_{bd(A) \cap M}(x) \\
&=&   {\Large{C}}_{bd(A)}(x). \\
\end{eqnarray*}
\end{proof}
\item  {\Large{C}}$_{bd(bd(bd(A)))}(x)  = $  {\Large{C}}$_{bd(bd(A))}(x)$, $\forall x \in X.$
\begin{proof} \begin{eqnarray}{\Large{C}}_{bd(bd(bd(A)))}(x)  &=&   {\Large{C}}_{cl(bd(bd(A))) \cap cl(bd(bd(A)))^c}(x) \\
&=&   {\Large{C}}_{bd(bd(A)) \cap cl(bd(bd(A)))^c}(x).
\end{eqnarray}
  
\begin{eqnarray*}
Now, {\Large{C}}_{(bd(bd(A)))^c}(x) &=& {\Large{C}}_{[cl(bd(A)) \cap cl(bd(A))^c]^c}(x)\\
&=& {\Large{C}}_{[bd(A) \cap cl(bd(A))^c]^c}(x)\\
&=& {\Large{C}}_{(bd(A))^c \cup [cl(bd(A))^c]^c}(x)\\
\end{eqnarray*}
Taking closure on both sides and considering $cl(bd(A))^c= B$, we have
\begin{eqnarray*}
{\Large{C}}_{cl(bd(bd(A)))^c}(x) &=& {\Large{C}}_{B \cup cl(B^c)}(x)\\
& \geq & {\Large{C}}_{B \cup B^c}(x)\\
&=& {\Large{C}}_{M}(x).
\end{eqnarray*}
Now, substituting this in equation(1)
\begin{eqnarray*}{\Large{C}}_{bd(bd(bd(A)))}(x)  &=&   {\Large{C}}_{cl(bd(bd(A))) \cap M}(x) \\
&=& {\Large{C}}_{bd(bd(A))}(x).
\end{eqnarray*}
\end{proof}
\end{enumerate}
\end{thm}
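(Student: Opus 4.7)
The central observation I would build on is that for any subM-set $X$ of $M$, the boundary $bd(X) = cl(X) \cap cl(X^c)$ is already closed, being a finite intersection of closed M-sets. Consequently $cl(bd(X)) = bd(X)$, which collapses the outer closures that would otherwise appear in both $bd(bd(A))$ and $bd(bd(bd(A)))$ and drives both parts of the theorem. The preceding theorem (that $bd(bd(A))$ is closed) is a special case of this.

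For part (i), I would simply write
\[
bd(bd(A)) = cl(bd(A)) \cap cl((bd(A))^c) = bd(A) \cap cl((bd(A))^c),
\]
from which $C_{bd(bd(A))}(x) \leq C_{bd(A)}(x)$ is immediate by the pointwise definition of $\cap$. This is a more direct route than the chain of inequalities ending in $bd(A) \cap M$ that one might otherwise be tempted to use.

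For part (ii), the inequality $C_{bd(bd(bd(A)))}(x) \leq C_{bd(bd(A))}(x)$ is just (i) applied with $A$ replaced by $bd(A)$. For the reverse inequality I would expand
\[
bd(bd(bd(A))) = cl(bd(bd(A))) \cap cl((bd(bd(A)))^c) = bd(bd(A)) \cap cl((bd(bd(A)))^c),
\]
using that $bd(bd(A))$ is closed. It then suffices to show $C_{cl((bd(bd(A)))^c)}(x) = C_M(x)$. Setting $V = cl((bd(A))^c)$ and applying de Morgan gives $(bd(bd(A)))^c = (bd(A))^c \cup V^c$, and since $V \supseteq (bd(A))^c$, distributing closure over the finite union yields
\[
cl((bd(bd(A)))^c) \supseteq cl((bd(A))^c) \cup V^c = V \cup V^c = M,
\]
which forces equality with $M$ and hence $bd(bd(bd(A))) = bd(bd(A)) \cap M = bd(bd(A))$.

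The main obstacle is really bookkeeping around the $\ominus$-based complement: the de Morgan identity $(P \cap Q)^c = P^c \cup Q^c$, monotonicity and finite-union distributivity of $cl$, and the trivial $V \cup V^c = M$ all need to be verified pointwise on count functions. They do hold cleanly provided we stay inside subM-sets of $M$, where $M \ominus N$ behaves exactly like an ordinary complement. I would also be explicit about a notational trap visible in the excerpt, where an expression like $cl(bd(A))^c$ must be parsed as $cl((bd(A))^c)$; making the bracketing unambiguous avoids the mild confusion that otherwise appears in the displayed calculation.
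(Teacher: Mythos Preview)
Your proposal is correct, and for part (ii) it is essentially the paper's own argument: both replace $cl(bd(bd(A)))$ by $bd(bd(A))$ using closedness, apply de Morgan to $(bd(bd(A)))^c$, abbreviate $cl((bd(A))^c)$ by a single letter (your $V$, the paper's $B$), and finish via $V\cup V^c = M$. Your observation about parsing the paper's ``$cl(bd(A))^c$'' as $cl((bd(A))^c)$ is exactly what is needed to make that substitution coherent.

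For part (i) your route is genuinely shorter than the paper's. The paper expands both factors of $bd(bd(A))$, passing through
\[
[cl(A)\cap cl(A^c)] \cap cl\bigl(int(A^c)\cup int(A)\bigr) \;=\; bd(A)\cap cl(M) \;=\; bd(A)\cap M,
\]
whose first equality is actually not justified (there is no reason $int(A)\cup int(A^c)$ should have closure $M$), though the conclusion survives because intersecting $bd(A)$ with anything already gives $\le bd(A)$. You avoid this by using only $cl(bd(A))=bd(A)$ and stopping at $bd(A)\cap cl((bd(A))^c)\le bd(A)$, which is both cleaner and free of that spurious step.
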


The following theorem decomposes boundary of an M-set.
\begin{thm}
{\Large{C}}$_{bd(A)}(x)= $  {\Large{C}}$_{int(bd(A)) \cup bd(bd(A))}(x)$, $\forall x \in X.$  
\end{thm}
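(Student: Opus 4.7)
The plan is to recognize that the claimed identity is just part (ii) of the earlier decomposition theorem ($C_{cl(A)}(x) = C_{int(A)\cup bd(A)}(x)$) applied to the M-set $bd(A)$, together with the observation that $bd(A)$ is itself closed.

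First I would establish that $bd(A)$ is a closed M-set. This follows immediately from its definition $bd(A) = cl(A) \cap cl(A^c)$: both $cl(A)$ and $cl(A^c)$ are closed M-sets by the definition of closure, and the intersection of two closed M-sets is closed (the complement is a union of two open M-sets, which is open). Consequently $cl(bd(A)) = bd(A)$, i.e., $C_{cl(bd(A))}(x) = C_{bd(A)}(x)$ for every $x \in X$.

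Next I would invoke part (ii) of the earlier decomposition theorem with $A$ replaced by $bd(A)$. That theorem gives
\begin{equation*}
C_{cl(bd(A))}(x) = C_{int(bd(A)) \cup bd(bd(A))}(x), \quad \forall x \in X.
\end{equation*}
Combining this with the previous identity $C_{cl(bd(A))}(x) = C_{bd(A)}(x)$ yields
\begin{equation*}
C_{bd(A)}(x) = C_{int(bd(A)) \cup bd(bd(A))}(x), \quad \forall x \in X,
\end{equation*}
which is the desired decomposition.

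There is essentially no obstacle here; the only thing to be careful about is justifying the closedness of $bd(A)$ cleanly in the multiset setting, which rests on the fact that the M-complement of a union equals the intersection of the M-complements (since we are working in the M-space $[X]^w$ with the refined complement $N^c = M \ominus N$). Once that is noted, the result is a one-line application of the earlier theorem.
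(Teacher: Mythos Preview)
Your argument is correct and in fact cleaner than the paper's: you obtain the full equality in one stroke by noting that $bd(A)=cl(A)\cap cl(A^c)$ is closed and then applying the decomposition $C_{cl(B)}(x)=C_{int(B)\cup bd(B)}(x)$ with $B=bd(A)$.

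The paper takes a different route. It invokes the earlier inequality $C_{bd(bd(A))}(x)\le C_{bd(A)}(x)$ together with the trivial $C_{int(bd(A))}(x)\le C_{bd(A)}(x)$, and from these concludes $C_{int(bd(A))\cup bd(bd(A))}(x)\le C_{bd(A)}(x)$. Note that, as written, this only establishes one inequality; the reverse containment is left implicit. Your approach avoids this asymmetry entirely, since the identity $cl(B)=int(B)\cup bd(B)$ already gives equality, and the closedness of $bd(A)$ turns $cl(bd(A))$ into $bd(A)$ directly. The trade-off is that the paper's argument uses only the specific boundary inequalities developed just before, while yours reaches further back to the $cl=int\cup bd$ decomposition and to the lattice fact that finite intersections of closed M-sets are closed; both are available, so your route is the more economical one.
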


\begin{proof}
 From theorem 3.12(i) and the property of interior i.e.,${\Large{C}}_{int(bd(A))}(x) \leq {\Large{C}}_{bd(A)}(x)$, its obvious that ${\Large{C}}_{int(bd(A)) \cup bd(bd(A))}(x) \leq {\Large{C}}_{bd(A)}(x)$, $\forall x \in X$.
\end{proof}

Following is a theorem to characterize boundary of an M-set in terms of limit points of the set. 
\begin{thm}
An M-set $A$ in an M-topology $(M, \tau)$ contains all its boundary points if and only if it contains all its limit points.
\end{thm}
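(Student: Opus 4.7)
The plan is to route the equivalence through the notion of a closed M-set, using Theorem 3.8 as a bridge on one side. The condition that $A$ contains all of its boundary points is precisely $C_{bd(A)}(x) \leq C_A(x)$ for all $x \in X$, which by Theorem 3.8 is equivalent to $A$ being closed. It therefore suffices to establish the classical characterization transported to the multiset setting: \emph{$A$ is closed if and only if $A$ contains all of its limit points}. Both implications in the statement of the theorem then follow by combining this equivalence with Theorem 3.8.

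For the forward direction of the reduced statement, I would assume $A$ is closed, so $A^c = M \ominus A$ is open. Suppose some element $k/x$ is a limit point of $A$ that is not contained in $A$. Then $k/x$ lies in the open M-set $A^c$, which serves as an open neighborhood of $k/x$ that fails to meet $A$ at any point with nonzero multiplicity; this contradicts the defining property of a limit point. Hence every limit point of $A$ must belong to $A$.

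For the converse, I would assume $A$ contains all its limit points and, towards a contradiction, that $A$ is not closed, so $cl(A) \neq A$. Pick $k/x \in cl(A)$ with $k > C_A(x)$. For any open neighborhood $U$ of $k/x$, if $U$ were disjoint from $A$ in the M-set sense then $U \subseteq A^c$, which would force $k/x \in int(A^c)$; but by the complement--interior duality $int(A^c) = (cl(A))^c$ used in Theorem 3.6(i), this contradicts $k/x \in cl(A)$. Thus every neighborhood of $k/x$ meets $A$ nontrivially, and since $k/x$ itself is not in $A$, the intersection occurs at a point other than $k/x$. So $k/x$ is a limit point of $A$ not contained in $A$, a contradiction; hence $A$ is closed.

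The main obstacle is the converse direction: translating the condition $k/x \in cl(A) \setminus A$ into the statement that every neighborhood of $k/x$ meets $A$ at a point other than $k/x$ within the multiset framework. The paper's definition of a limit point is somewhat informal, and care is needed to interpret ``intersects $A$ in some point other than $k/x$'' correctly with respect to multiplicities. The duality $int(A^c) = (cl(A))^c$, which is already implicit in Theorem 3.6(i) and the standing definition of M-complement, is the essential tool that makes this translation rigorous.
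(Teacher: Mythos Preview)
Your route differs from the paper's: you factor the equivalence through closedness using Theorem~3.8 and then prove the transported classical characterization ``$A$ closed $\Leftrightarrow$ $A$ contains all its limit points,'' whereas the paper argues directly with boundary and limit points, never invoking Theorem~3.8 or the word ``closed.'' The paper's forward direction takes a putative limit point $k/x \in A^c$, observes that every neighborhood of $k/x$ then meets both $A$ and $A^c$, concludes $k/x \in bd(A)$, and derives a contradiction from $bd(A) \subseteq A$; the converse is handled symmetrically. Your detour through Theorem~3.8 is economical and conceptually clean, and it makes the relationship to the classical result explicit.

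There is, however, a genuine multiset-specific gap in your forward direction. You claim that the open M-set $A^c$ ``fails to meet $A$ at any point with nonzero multiplicity,'' but in M-set topology $A \cap A^c$ is generally \emph{not} empty: with $C_M(x)=4$ and $C_A(x)=2$ one has $C_{A^c}(x)=2$ and hence $C_{A\cap A^c}(x)=2$. A second, related issue is that $k/x \notin A$ (meaning $k > C_A(x)$) does not force $k/x \in A^c$ (meaning $k \le C_M(x)-C_A(x)$), so your assertion that $A^c$ is a neighborhood of $k/x$ also needs justification. Thus the step ``$A^c$ is an open neighborhood of $k/x$ disjoint from $A$'' fails as written in the M-set framework and the contradiction does not follow. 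The paper's direct argument avoids the disjointness claim entirely by instead placing $k/x$ in $bd(A)$; admittedly it is also informal about the tension between $k/x \in A$ and $k/x \in A^c$, but your version introduces an additional false assertion that would need to be repaired before the argument goes through.
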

\label{thm}
\begin{proof}
Suppose $A$ contains all its boundary points and if possible let $k/x$ $\in A^{c}$ be a limit point of A. Since every neighborhood of $k/x$ contains both a point of $A^{c}$ and a point of $A$, we have $k/x$ $\in bd(A)\subseteq cl(A)$, which is a contradiction since $A$ contains all its boundary points.
\\ Conversely, let $A$ contains all its limit points. If $k/x \in A \ominus bd(A)$ and $N$ is a neighborhood of $k/x$ then $N$ contains a point of $A$ which cannot be equal to $k/x$ since $k/x$ $\notin A$. Therefore, $k/x$ is a limit point of $A$ and is not contained in $A$. Hence, $A$ contains all its boundary points.
\end{proof}
 

\begin{thm}
Let $A$ be an M-set in an M-topology $(M, \tau)$. Then $ext(A)$ is empty if and only if every nonempty open M-set in $M$ contains a point of $A$.
\end{thm}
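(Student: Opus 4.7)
The plan is to reduce the statement to Remark 3.2, which identifies $\mathrm{ext}(A)=\mathrm{int}(A^c)$ as the largest open subM-set contained in $A^c$. Once this is in hand, the theorem is essentially the tautology: $A^c$ houses no nonempty open M-set if and only if no nonempty open M-set is wholly contained in $A^c$.

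For the forward direction, I would assume $\mathrm{ext}(A)=\phi$ and take an arbitrary nonempty open M-set $U\subseteq M$. If $U$ contained no point of $A$, then by the multiset count inequalities $C_U(x)\le C_M(x)-C_A(x)=C_{A^c}(x)$ for every $x$, i.e.\ $U\subseteq A^c$. Since $U$ is open, this would force $U\subseteq \mathrm{int}(A^c)=\mathrm{ext}(A)=\phi$, contradicting $U\ne\phi$. So $U$ must contain a point of $A$.

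For the converse I would argue contrapositively: if $\mathrm{ext}(A)\ne\phi$, then $\mathrm{ext}(A)=\mathrm{int}(A^c)$ itself exhibits a nonempty open M-set that lies entirely inside $A^c$ and therefore contains no point of $A$, which contradicts the hypothesis that every nonempty open M-set contains a point of $A$.

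The main obstacle is purely notational: the multiset bookkeeping behind the phrase \emph{``$U$ contains a point of $A$''}. Because $C_{A^c}(x)=C_M(x)-C_A(x)$, the M-set $A\cap A^c$ need not be empty, so the naive reading ``$C_U(x)>0$ and $C_A(x)>0$ for some $x$'' (i.e.\ $U\cap A\neq\phi$) is strictly weaker than ``$U\not\subseteq A^c$''; only under the latter reading do both directions go through. I would therefore fix the convention at the outset of the proof that ``$U$ contains a point of $A$'' is the negation of ``$U\subseteq A^c$'', so that the two one-line arguments above close the equivalence cleanly via Remark 3.2.
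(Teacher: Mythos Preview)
Your argument is correct and takes a different route from the paper. The paper works on the closure side, rewriting $\mathrm{ext}(A)=\phi$ as $\mathrm{cl}(A)=M$ and invoking limit points to show that the hypothesis forces every $k/x\in M$ into $\mathrm{cl}(A)$; you instead stay on the interior side and use Remark~3.3 to reduce the whole statement to the tautology that $A^c$ contains a nonempty open subM-set iff $\mathrm{int}(A^c)\neq\phi$, which is shorter and avoids limit points altogether. Your care with the phrase ``contains a point of $A$'' is also warranted rather than merely notational: in the multiset setting the readings $U\cap A\neq\phi$ and $U\not\subseteq A^c$ are inequivalent, and the biconditional genuinely requires the latter (for instance, with $M=\{3/a\}$, $\tau=\{\phi,M,\{2/a\}\}$ and $A=\{1/a\}$, every nonempty open $U$ satisfies $U\cap A\neq\phi$ yet $\mathrm{ext}(A)=\{2/a\}\neq\phi$). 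The paper does not isolate this point and in fact closes its forward implication with the weaker conclusion $C_{O\cap A}(x)\neq 0$, so your explicit convention is a real sharpening rather than pedantry.
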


\begin{proof}
Let every non empty open $M$- set in $M, \tau$ contains a point of $A$. Then, every $k/x$ $\in A \subseteq M$ is a limit point if $A$. So, \begin{eqnarray}
 k \leq C_{cl(A)(x)} 
\Rightarrow C_{M(x)} \leq C_{cl(A)(x)}
\end{eqnarray} 
\label{eq3}

\begin{eqnarray*}Now,~ to 	~show ~that
~C_{ext(A)}(x) &=& C_{\phi}(x) \\
\Leftrightarrow C_{int(A^{c})}(x) &=& C_{\phi}(x)\\
\Leftrightarrow C_{(cl(A))^{c}}(x) &=& C_{\phi}(x)\\
\Leftrightarrow C_{cl(A)}(x) &=& C_{M}(x)
\end{eqnarray*} 

But then we have, 
\begin{eqnarray}
C_{cl(A)}(x) \leq C_{M}(x), \forall x.
\end{eqnarray}
\label{eq4}
So, (\ref{eq3}) and (4) imply that $ext(A)$ is empty.

Conversely, let $C_{ext(A)}(x) = C_{\phi}(x)$. Let $O$ be any open $M$ -set in $(M, \tau)$. To show that $O$ contains a point of $A$.\\
Let $k/x$ $\in O$. Since $ext(A)$ is empty so no neighborhood of $k/x$ is contained in $A^{c}$, i.e., all neighborhoods of $k/x$ are contained in $A$. Therefore we have, $C_{O \cap A}(x) \neq C_{\phi}(x)$.
\end{proof}

\section{Conclusion}
The notions of exterior and boundary in context of multiset theory have been introduced and studied in this paper.Some properties of the introduced notions are studied along with their characterization and decomposition. Further, boundary is characterized in terms of open sets, closed sets, clopen sets. Theorem.3.17 characterizes boundary in terms of limit points. The necessary and sufficient condition for an $M$- set to have empty exterior is contemplated by Theorem.3.18. 

Topological and topology-based data are useful for detecting and correcting digitizing errors which occurs in spatial analysis. Keeping this in view, applications of the initiated concepts in those models which are designed using multiset theory can be considered for future work.


\end{document}